\newtheorem{theorem}{Theorem}
\newtheorem{proposition}{Proposition}
\newtheorem{remark}{Remark}
\def\eps{\varepsilon}
\begin{document}

\title[Instability of Isolated Spectrum for W-shaped Maps]{ Instability of Isolated Spectrum for W-shaped Maps}
\thanks{The research of the authors was supported by NSERC grants. }
\author{Zhenyang Li}
\address{Department of Mathematics and Statistics, Concordia University,
1455 de Maisonneuve Blvd. West, Montreal, Quebec H3G 1M8, Canada}
\email{zhenyangemail@gmail.com}
\email{pgora@mathstat.concordia.ca}
\author{Pawe\l\ G\'{o}ra}
\subjclass[2000]{Primary 37A10, 37A05, 37E05}

\date{\today }
\keywords{piecewise expanding maps, Frobenius-Perron operator, Markov maps, W-shaped maps,  second eigenvalue, stability, metastable behaviour}

\begin{abstract}
In this note we consider  $W$-shaped map  $W_0=W_{s_1,s_2}$ with $\frac 1{s_1}+\frac 1{s_2}=1$ and show that  eigenvalue $1$ is not stable.
We do this in a constructive way. For each perturbing map $W_a$ we show the existence of the ``second" eigenvalue $\lambda_a$,
such that   $\lambda_a\to 1$, as $a\to 0$, which proves instability of isolated spectrum of $W_0$.
At the same time, the existence of  second eigenvalues close to 1 causes the maps $W_a$ behave in a metastable way. They have two almost invariant sets
and the system spends long periods of consecutive iterations in each of them with infrequent jumps from one to the other.
\end{abstract}
\maketitle

\section{Introduction}
One of the most important problems in the theory of dynamical systems is their stability and possible instability.
In particular, in the theory of piecewise expanding maps of interval, it is interesting whether the given system
has a stable absolutely continuous invariant measure (acim), and more generally, if the isolated spectrum of
Perron-Frobenius operator is stable under small perturbations of the map. For a general introduction to the theory of
 piecewise expanding one-dimensional maps we refer the reader to
\cite{BG}. Most relevant to the   stability problems are papers \cite{K} and \cite{KL99}.

In general, the setting of the stability problem we are interested in is as follows: Let $\tau_0$ be a piecewise expanding map of an interval
with unique acim $\mu_0$ and $\{\tau_a\}_{a>0}$ a family of its perturbations with acims $\mu_a$, correspondingly. If maps $\tau_a$ converge
to $\tau_0$ (say, in Skorokhod metric), do their acims converge (say, in $*$-weak topology) to $\mu_0$? Or more generally, do the isolated spectra
of $P_{\tau_a}$ converge to the isolated spectrum of $P_{\tau_0}$, including multiplicities and eigenfunctions? $P_\tau$ is the Perron-Frobenius operator induced
by $\tau$ on the space of functions of bounded variation and by isolated spectrum we mean the part of the spectrum which lies outside the essential spectral radius.
Papers  \cite{K} and \cite{KL99} show that such stability takes place  if the family $\{\tau_a\}_{a\ge 0}$ satisfies Lasota-Yorke inequality (\cite{LaY} or \cite{EG} for
strengthened form) with uniform constants. Usual conditions ensuring this are $|\tau_a'|> 2+\eps$ plus the minimal length of subintervals of defining partitions
uniformly separated from $0$.

One of the known sources of instability  is the presence of turning fixed or periodic  point touching  a map branch  with slope 2 or smaller.
The famous example is the $W$-shaped map introduced in \cite{K}. Because of the turning fixed point we cannot use an iterate of the map to increase
the minimal slope. It causes appearance of arbitrary short partition intervals in perturbed maps.

Recently the interest in $W$-shaped maps increased. We will introduce them in more detail. The $W_{s_1,s_2}$ map is a piecewise linear map of the interval $[0,1]$ onto itself with a graph in the shape of letter W. The first and the third branch are decreasing, the second and the last increasing.
The first and the last branches are onto and with relatively large slopes (usually around  -4 and 4). The second branch has slope $s_1$ and the third
has slope $-s_2$ and $1/2$ is the point where they meet. $W_{s_1,s_2}(1/2)=1/2$ so $1/2$ is  the turning fixed point. The original $W$-map of
\cite{K} is of $W_{2,2}$ type and it was proved there that its acim is unstable under some family of localized perturbations.
In \cite{EM} a family of non-local Markov perturbations was constructed which also caused  the instability of acim of $W_{2,2}$. By non-local we mean that each perturbed map is exact on the whole $[0,1]$. This result has been generalized in \cite{LGBPE} where a whole continuous family of perturbations
was constructed with the same effect. More general situation was considered in \cite{ZhLi}. The perturbations similar to that of
\cite{LGBPE} were considered.  It was shown there that depending on whether $\frac 1{s_1}+\frac 1{s_2}$ is larger, equal or smaller than  $1$ the limit of $\mu_a$'s is Dirac measure $\delta_{1/2}$, or a combination of $\delta_{1/2}$ and $\mu_0$, or $\mu_0$, correspondingly. This result suggested that
condition $\frac 1{s_1}+\frac 1{s_2}<1$ may actually imply stability which was later proved for a quite general setting in \cite{EG}.

In this note we consider map $W_0$ of type  $W_{s_1,s_2}$ with $\frac 1{s_1}+\frac 1{s_2}=1$ and show that that eigenvalue $1$ is not stable.
We do this in a constructive way. For each perturbed map $W_a$ we show the existence of the ``second" eigenvalue $\lambda_a$,
$$\frac{1-2ra}{1+2ra}<\lambda_a<\frac 1{1+2ra}\ ,$$
where $r$ is a constant independent of $a$. Thus, as $a\to 0$ the eigenvalues $\lambda_a\to 1$ which shows instability of isolated spectrum of $W_0$.
At the same time, the existence of  second eigenvalues close to 1 causes the maps $W_a$ behave in a metastable way. They have two almost invariant sets
and the system spends long periods of consecutive iterations in each of them with infrequent jumps from one to the other.

\section{Markov $W_a$ maps and their invariant densities}

Let $s_1$, $s_2>1$ satisfy $\frac{1}{s_1}+\frac{1}{s_2}=1$, and $r>0$. Let us consider  $W$-shaped map:
\begin{eqnarray*}
W_0(x) = \begin{cases}
W_{0,1}(x):=1-2s_2x, & 0\leq x < \frac{1}{2}-\frac{1}{2s_1}, \\
W_{0,2}(x):=s_1(x-\frac{1}{2}+\frac{1}{2s_1}), & \frac{1}{2}-\frac{1}{2s_1}\leq x < \frac{1}{2},\\
W_{0,3}(x):=s_2(\frac{1}{2}+\frac{1}{2s_2}-x), & \frac{1}{2}\leq x < \frac{1}{2}+\frac{1}{2s_2},\\
W_{0,4}(x):=2s_1(x-1)+1,& \frac{1}{2}+\frac{1}{2s_2}\leq x <1,
\end{cases}
\end{eqnarray*}
and its perturbations, $W_a$ maps with parameter $a>0$:
\begin{eqnarray*}
W_a(x) = \begin{cases}
W_{a,1}(x):=1-2s_2x, & 0\leq x < \frac{1}{2}-\frac{1}{2s_1}, \\
W_{a,2}(x):=(s_1+2rs_1a)(x-\frac{1}{2}+\frac{1}{2s_1}), & \frac{1}{2}-\frac{1}{2s_1}\leq x < \frac{1}{2},\\
W_{a,3}(x):=(s_2+2rs_2a)(\frac{1}{2}+\frac{1}{2s_2}-x), & \frac{1}{2}\leq x < \frac{1}{2}+\frac{1}{2s_2},\\
W_{a,4}(x):=2s_1(x-1)+1,& \frac{1}{2}+\frac{1}{2s_2}\leq x <1.
\end{cases}
\end{eqnarray*}

Let $\tau_i=W_{a,i}^{-1}$, $i=1, 2, 3, 4$; $I_0=[0,\frac{1}{2}+ra]$. The Frobenius-Perron operator (see \cite{BG}) associated with $W_a$ is
\begin{eqnarray*}
P_af =\frac{1}{2s_2}f\comp \tau_1+\frac{1}{s_1+2rs_1a}(f\comp \tau_2)\chi_{I_0} &+&\frac{1}{s_2+2rs_2a}(f\comp \tau_3)\chi_{I_0}\\&+&\frac{1}{2s_1}f\comp \tau_4
\end{eqnarray*}
Note that
\begin{equation}\label{Eq:iterate}\begin{split}
&\chi_{I_0}\comp\tau_1=1\ \ , \ \ \chi_{I_0}\comp \tau_2=\chi_{I_0}\ , \\
& \chi_{I_0}\comp \tau_3=\chi_{[W_a^2(1/2),\frac{1}{2}+ra]}\  \ , \ \ \chi_{I_0}\comp \tau_4=0\ .
\end{split}
\end{equation}
Let $I_1=[W_a^2(1/2),\frac{1}{2}+ra]$ whose left end point is $W^2_a(\frac{1}{2})=W_a(\frac{1}{2}+ra)$.

We will consider only parameters  $a$ such that $W_a$ is a Markov map, i.e., some iterate of $1/2$ falls into an endpoint  of the defining partition.
Let $a$ satisfy:
\begin{equation}\label{Eq:def m1}
W_a^{m}(\frac{1}{2}+ra)=\frac{1}{2}-\frac{1}{2s_1},
\end{equation}
where $m \geq 1$ is the first time when the trajectory of  $W_a(\frac{1}{2})=\frac{1}{2}+ra$ reaches the partition point $\frac{1}{2}-\frac{1}{2s_1}$. Note that $\frac{1}{2}-\frac{1}{2s_1}=\frac{1}{2s_2}$. The point $W_a(\frac{1}{2}+ra)$ is just below the fixed point on the second branch of $W_a$ and the consecutive images
$W_a^{i}(\frac{1}{2}+ra)$ decrease until for $i=m$ the equality (\ref{Eq:def m1}) is satisfied.

Let us take $1$ as the initial function  and iterate it using operator $P_a$. Let $P_a^n 1$  be denoted by $f_{n,m}$. Let \[I_i=[W_a^i(\frac{1}{2}+ra),\frac{1}{2}+ra], i=1,2,\cdots, m.\]

 Because of (\ref{Eq:def m1})and (\ref{Eq:iterate}), after some number of  iterations ($n\ge m+1$), we have:
\[f_{n,m}=c_{n,0}+\alpha_{n,0}\chi_{I_0}+\alpha_{n,1}\chi_{I_1}+\alpha_{n,2}\chi_{I_2}+\cdots+\alpha_{n,m-1}\chi_{I_{m-1}}+\alpha_{n,m}\chi_{I_{m}},\]
where $c_{n,0}$ and $\alpha_{n,i}$ ($i=0,1,\cdots,m$) are constants. Now, let us look at the $f_{n+1,m}$. We have the following proposition.

\begin{proposition}\label{Prop4}
$(I)$ $c_{n,0}\comp \tau_1$ and $c_{n,0}\comp \tau_4$ are again constant functions, $c_{n,0}\comp \tau_2\chi_{I_0}$ and $c_{n,0}\comp \tau_3\chi_{I_0}$ are the characteristic function $\chi_{I_0}$;

$(II)$ $\chi_{I_0}\comp \tau_1$ is constant function, $\chi_{I_0}\comp \tau_2\chi_{I_0}=\chi_{I_0}$, $\chi_{I_0}\comp \tau_3\chi_{I_0}=\chi_{I_1}$, $\chi_{I_0}\comp \tau_4$ is 0;

$(III)$ For $i=1, 2, \cdots, m-1$, $\chi_{I_i}\comp \tau_1$ and $\chi_{I_i}\comp \tau_4$ are 0, $\chi_{I_i}\comp \tau_2\chi_{I_0}=\chi_{I_{i+1}}$, $\chi_{I_i}\comp \tau_3\chi_{I_0}=\chi_{I_1}$;

$(IV)$ $\chi_{I_m}\comp \tau_1$ and $\chi_{I_m}\comp \tau_4$ are 0, $\chi_{I_m}\comp \tau_2\chi_{I_0}=\chi_{I_0}$, $\chi_{I_m}\comp \tau_3\chi_{I_0}=\chi_{I_1}$.
\end{proposition}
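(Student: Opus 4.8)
The plan is to read off every composition in $(I)$–$(IV)$ directly from explicit formulas for the four inverse branches together with the geometry of the nested intervals $I_1\subset I_2\subset\dots\subset I_m\subset I_0$. First I would write $\tau_1(y)=\frac{1-y}{2s_2}$, $\tau_2(y)=\frac{y}{s_1+2rs_1a}+\frac12-\frac1{2s_1}$, $\tau_3(y)=\frac12+\frac1{2s_2}-\frac{y}{s_2+2rs_2a}$, $\tau_4(y)=\frac{y-1}{2s_1}+1$, and record their images: $\tau_1$ is onto $[0,\frac1{2s_2}]$; $\tau_2$ maps $I_0$ onto $[\frac1{2s_2},\frac12]$ (using $\frac12-\frac1{2s_1}=\frac1{2s_2}$ and $\tau_2(\frac12+ra)=\frac12$); $\tau_3$ maps $I_0$ onto $[\frac12,\frac12+\frac1{2s_2}]$ with $\tau_3(\frac12+ra)=\frac12$; and $\tau_4$ is onto $[1-\frac1{2s_1},1]$. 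I would restrict to $a$ small enough that $ra<\frac1{2s_2}$, so that $\frac12+ra$ lies strictly below both $\frac12+\frac1{2s_2}$ and $1-\frac1{2s_1}$. The one ingredient I would import from the paragraph preceding the statement is the monotone picture of the orbit of $\frac12$: namely $\frac12-\frac1{2s_1}=W_a^m(\frac12+ra)<W_a^{m-1}(\frac12+ra)<\dots<W_a(\frac12+ra)<\frac12$, with each $W_a^i(\frac12+ra)$, $1\le i\le m-1$, lying strictly inside the domain $[\frac12-\frac1{2s_1},\frac12)$ of the second branch, so that $W_{a,2}\bigl(W_a^i(\tfrac12+ra)\bigr)=W_a^{i+1}(\tfrac12+ra)$.

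With this in hand the statement reduces to three elementary mechanisms, which I would isolate once and then apply repeatedly. \emph{Shift by $\tau_2$}: since $\tau_2$ is increasing onto $[\frac1{2s_2},\frac12]$ with inverse $W_{a,2}$, for $1\le i\le m-1$ the set $\{y\in I_0:\tau_2(y)\in I_i\}$ has left endpoint $W_{a,2}\bigl(W_a^i(\tfrac12+ra)\bigr)=W_a^{i+1}(\tfrac12+ra)$ and right endpoint $\frac12+ra$ (the upper constraint is vacuous because the image of $\tau_2$ caps at $\frac12$), hence equals $I_{i+1}$; for $i=m$ the whole image $[\frac1{2s_2},\frac12]$ of $\tau_2$ lies in $I_m=[\frac1{2s_2},\frac12+ra]$, so the pullback is all of $I_0$. \emph{Collapse by $\tau_3$}: the image of $\tau_3$ is $[\frac12,\frac12+\frac1{2s_2}]$, and each of $I_0$ and $I_i$ ($i\ge1$) meets it in exactly $[\frac12,\frac12+ra]$ (upper end $\frac12+ra<\frac12+\frac1{2s_2}$, lower end $\le\frac12$), whose $\tau_3$-preimage is $[W_a(\tfrac12+ra),\frac12+ra]=I_1$. \emph{Triviality of $\tau_1$, $\tau_4$}: the image $[0,\frac1{2s_2}]$ of $\tau_1$ lies in $I_0$ but meets each $I_i$, $i\ge1$, in at most one endpoint (the left endpoints $W_a^i(\frac12+ra)$ are $\ge\frac1{2s_2}$, strictly for $i<m$), while the image $[1-\frac1{2s_1},1]$ of $\tau_4$ is disjoint from $I_0$ and hence from every $I_i$.

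Assembling these: $(I)$ holds because precomposition with a map preserves constancy, so $c_{n,0}\comp\tau_1$ and $c_{n,0}\comp\tau_4$ are constant, while $(c_{n,0}\comp\tau_2)\chi_{I_0}$ and $(c_{n,0}\comp\tau_3)\chi_{I_0}$ equal $c_{n,0}\chi_{I_0}$ since $\tau_2$ and $\tau_3$ are defined on all of $I_0$ (here ``are the characteristic function $\chi_{I_0}$'' is to be read as ``are a scalar multiple of $\chi_{I_0}$''). Part $(II)$ is the three mechanisms applied to $f=\chi_{I_0}$, together with the observation that $\tau_2$ maps $I_0$ into $I_0$, so $(\chi_{I_0}\comp\tau_2)\chi_{I_0}=\chi_{I_0}$; it merely restates~(\ref{Eq:iterate}). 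Parts $(III)$ and $(IV)$ are the mechanisms applied to $\chi_{I_i}$ for $1\le i\le m-1$ and to $\chi_{I_m}$ respectively, the case $i=m$ producing the ``reset'' $(\chi_{I_m}\comp\tau_2)\chi_{I_0}=\chi_{I_0}$.

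The part I expect to require the most care is not the algebra but the structural input inherited from the paragraph just above the proposition: that the orbit of $\frac12$ enters the second branch strictly \emph{below} the repelling fixed point of $W_{a,2}$ and then decreases monotonically, remaining inside that branch's domain until it lands exactly on its left endpoint at time $m$. This is what makes the definitions of $m$ and of the intervals $I_i$ meaningful and what powers the ``shift by $\tau_2$'' step. Granting it, everything else is an elementary check of interval inclusions; the only remaining delicacy is the routine one of distinguishing genuinely disjoint intervals from pairs overlapping in a single point, and of carrying the $\chi_{I_0}$ factors in $P_a$ correctly through each composition.
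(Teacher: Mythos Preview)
Your proposal is correct and is essentially a careful unpacking of what the paper leaves implicit: the paper offers no proof of this proposition, treating it as an immediate consequence of the definitions of $\tau_1,\dots,\tau_4$, the nesting $I_1\subset\dots\subset I_m\subset I_0$, and the identities~(\ref{Eq:iterate}). Your three mechanisms (shift by $\tau_2$, collapse by $\tau_3$, triviality of $\tau_1,\tau_4$) are exactly the right way to organize the routine interval checks, and your reading of ``the characteristic function $\chi_{I_0}$'' in $(I)$ as ``a scalar multiple of $\chi_{I_0}$'' is the intended one.
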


Thus, we have the following proposition.
\begin{proposition}\label{Prop5}
 for $n$ big enough, $f_{n,m}$ always has the form:
$$f_{n,m}=c_{n,0}+\alpha_{n,0}\chi_{I_0}+\alpha_{n,1}\chi_{I_1}+\alpha_{n,2}\chi_{I_2}+\cdots+\alpha_{n,m-1}\chi_{I_{m-1}}+\alpha_{n,m}\chi_{I_{m}},$$
and
\begin{eqnarray*}
\left[ \begin {array}{c} c_{n+1,0}\\
\alpha_{n+1,0}\\
\alpha_{n+1,1}\\
\vdots\\
\alpha_{n+1,m}\end {array} \right]=A_m\left[ \begin {array}{c} c_{n,0}\\
\alpha_{n,0}\\
\alpha_{n,1}\\
\vdots\\
\alpha_{n,m}\end {array} \right]\ .
\end{eqnarray*}
where $(m+2)\times(m+2)$ matrix $A_m$ is given by
{\tiny
\begin{eqnarray*}
A_m=\left[ \begin {array}{cccccccc} \frac{1}{2s_1}+\frac{1}{2s_1}&\frac{1}{2s_2}&0&0&0&\cdots&0&0\\ \noalign{\medskip} \frac{1}{s_1+2rs_1a}+\frac{1}{s_2+2rs_2a}& \frac{1}{s_1+2rs_1a}&0&0&0&\cdots&0& \frac{1}{s_1+2rs_1a}\\ \noalign{\medskip}0&
 \frac{1}{s_2+2rs_2a}& \frac{1}{s_2+2rs_2a}& \frac{1}{s_2+2rs_2a}& \frac{1}{s_2+2rs_2a}& \cdots&
 \frac{1}{s_2+2rs_2a}& \frac{1}{s_2+2rs_2a}
\\ \noalign{\medskip}0&0& \frac{1}{s_1+2rs_1a}&0&0&\cdots&0&0
\\ \noalign{\medskip}0&0&0& \frac{1}{s_1+2rs_1a}&0&\cdots&0&0
\\ \noalign{\medskip}\vdots&\vdots&\vdots&\vdots&\vdots&\ddots&\vdots&\vdots
\\ \noalign{\medskip}0&0&0&0&0& \cdots&0&0
\\ \noalign{\medskip}0&0&0&0&0&\cdots& \frac{1}{s_1+2rs_1a}&0
\end {array} \right]\ .
\end{eqnarray*}
}
\end{proposition}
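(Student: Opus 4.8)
The statement is essentially a corollary of Proposition~\ref{Prop4}. The key observation is that $V_m:=\operatorname{span}\{1,\chi_{I_0},\chi_{I_1},\dots,\chi_{I_m}\}$ is a finite-dimensional $P_a$-invariant subspace, and that $A_m$ is just the matrix of the restriction $P_a|_{V_m}$ in the ordered basis $(1,\chi_{I_0},\dots,\chi_{I_m})$. So I would first check that this basis is genuine, i.e.\ that $1,\chi_{I_0},\dots,\chi_{I_m}$ are linearly independent, which is what makes the coefficients $c_{n,0},\alpha_{n,0},\dots,\alpha_{n,m}$ in the displayed form of $f_{n,m}$ well defined. By the remark following \eqref{Eq:def m1}, the points $W_a^{i}(\tfrac12+ra)$ ($i=1,\dots,m$) strictly decrease and $W_a^{m}(\tfrac12+ra)=\tfrac12-\tfrac1{2s_1}=\tfrac1{2s_2}>0$, so $I_1\subsetneq I_2\subsetneq\dots\subsetneq I_m\subsetneq I_0$, all with positive Lebesgue measure, and $[0,1]\setminus I_0$ has positive measure as well. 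Evaluating a combination $c+\sum_{i=0}^{m}\alpha_i\chi_{I_i}$ on the $m+2$ disjoint pieces $[0,1]\setminus I_0$, $I_0\setminus I_m$, $I_m\setminus I_{m-1},\dots,I_2\setminus I_1$, $I_1$ recovers $c$ and all the $\alpha_i$ by telescoping; this yields linear independence and uniqueness of the representation at once.

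Next I would apply $P_a$ to each basis function, combining its defining formula with \eqref{Eq:iterate} and the four cases of Proposition~\ref{Prop4}: case $(I)$ (with $c_{n,0}=1$) gives $P_a 1=\bigl(\tfrac1{2s_1}+\tfrac1{2s_2}\bigr)1+\bigl(\tfrac1{s_1+2rs_1a}+\tfrac1{s_2+2rs_2a}\bigr)\chi_{I_0}$; case $(II)$ gives $P_a\chi_{I_0}=\tfrac1{2s_2}\,1+\tfrac1{s_1+2rs_1a}\chi_{I_0}+\tfrac1{s_2+2rs_2a}\chi_{I_1}$; case $(III)$ gives $P_a\chi_{I_i}=\tfrac1{s_2+2rs_2a}\chi_{I_1}+\tfrac1{s_1+2rs_1a}\chi_{I_{i+1}}$ for $1\le i\le m-1$; and case $(IV)$ gives $P_a\chi_{I_m}=\tfrac1{s_1+2rs_1a}\chi_{I_0}+\tfrac1{s_2+2rs_2a}\chi_{I_1}$. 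In particular $P_a(V_m)\subseteq V_m$, and writing the coefficient vectors of $P_a 1,P_a\chi_{I_0},\dots,P_a\chi_{I_m}$ as the successive columns of a matrix reproduces exactly the $(m+2)\times(m+2)$ array $A_m$ of the statement.

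To finish, I would note that $f_{0,m}=1\in V_m$, so by invariance $f_{n,m}=P_a^{n}1\in V_m$ for every $n$; hence $f_{n,m}$ has the displayed form (for all $n$, in particular for $n$ big enough). Applying $P_a$ to that form, expanding each term by linearity via the four identities above, and equating coefficients --- legitimate because of the uniqueness established in the first step --- then gives $(c_{n+1,0},\alpha_{n+1,0},\dots,\alpha_{n+1,m})^{T}=A_m(c_{n,0},\alpha_{n,0},\dots,\alpha_{n,m})^{T}$, as required.

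I do not expect a genuine obstacle: the substantive work is already packaged in Proposition~\ref{Prop4}, and the rest is linear bookkeeping. The two spots that want a little care are the linear independence of the basis --- this is where the Markov condition \eqref{Eq:def m1} and the strict monotonicity of the orbit of $\tfrac12+ra$ actually enter --- and keeping track of which composition supplies each summand (for instance that $\chi_{I_0}\comp\tau_3\,\chi_{I_0}=\chi_{I_1}$, not $\chi_{I_0}$), for which \eqref{Eq:iterate} is tailored. In particular I would double-check the last column and the bottom-right $2\times2$ corner of $A_m$, since these encode the ``wrap-around'' produced by $W_a^{m}(\tfrac12+ra)$ landing on the partition point $\tfrac12-\tfrac1{2s_1}$, and an index slip there would propagate through the recursion.
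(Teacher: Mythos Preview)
Your proposal is correct and follows exactly the approach implicit in the paper, which simply states Proposition~\ref{Prop5} as a direct consequence of Proposition~\ref{Prop4} without spelling out the linear-algebra bookkeeping; you have made that bookkeeping explicit, including the useful check of linear independence of $1,\chi_{I_0},\dots,\chi_{I_m}$. Note in passing that the $(1,1)$ entry of $A_m$ in the statement is a typo for $\tfrac{1}{2s_1}+\tfrac{1}{2s_2}$, which is exactly what your computation of $P_a 1$ produces and what the subsequent simplification to $\tfrac12$ confirms.
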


Since $\frac{1}{s_1}+\frac{1}{s_2}=1$, we can simplify the $A_m$ to the form
\begin{eqnarray*}
A_m=\left[ \begin {array}{cccccccc} \frac{1}{2}&\frac{1}{2s_2}&0&0&0&\cdots&0&0\\ \noalign{\medskip} \frac{1}{1+2ra}& \frac{1}{s_1+2rs_1a}&0&0&0&\cdots&0& \frac{1}{s_1+2rs_1a}\\ \noalign{\medskip}0&
 \frac{1}{s_2+2rs_2a}& \frac{1}{s_2+2rs_2a}& \frac{1}{s_2+2rs_2a}& \frac{1}{s_2+2rs_2a}& \cdots&
 \frac{1}{s_2+2rs_2a}& \frac{1}{s_2+2rs_2a}
\\ \noalign{\medskip}0&0& \frac{1}{s_1+2rs_1a}&0&0&\cdots&0&0
\\ \noalign{\medskip}0&0&0& \frac{1}{s_1+2rs_1a}&0&\cdots&0&0
\\ \noalign{\medskip}\vdots&\vdots&\vdots&\vdots&\vdots&\ddots&\vdots&\vdots
\\ \noalign{\medskip}0&0&0&0&0& \cdots&0&0
\\ \noalign{\medskip}0&0&0&0&0&\cdots& \frac{1}{s_1+2rs_1a}&0
\end {array} \right].
\end{eqnarray*}
We also need the following proposition.
\begin{proposition}
Equation (\ref{Eq:def m1}) is equivalent to:
\[(s_2+2rs_2a)(s_1+2rs_1a)^{m-1}-\sum\limits_{i=0}^{m-1}(s_1+2rs_1a)^i=\frac{1}{2rs_1a}\ ,\]
or
\begin{equation}\label{good m1}
(s_1+2rs_1a)^m=\frac{1}{4r^2s_2^2a^2}.
\end{equation}
\end{proposition}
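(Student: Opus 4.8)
The plan is to turn the Markov (hitting) condition (\ref{Eq:def m1}) into a statement about a one-dimensional \emph{affine} map, obtained by centering the dynamics of $W_a$ at its fixed point $1/2$, then to iterate that map explicitly and simplify with the help of $\frac1{s_1}+\frac1{s_2}=1$. First I would fix the shorthand $b_1:=s_1+2rs_1a=s_1(1+2ra)$ and $b_2:=s_2+2rs_2a=s_2(1+2ra)$ and record two elementary facts: $\frac1{b_1}+\frac1{b_2}=\frac1{1+2ra}$, and, because $\frac1{s_1}+\frac1{s_2}=1$ is the same as $s_1+s_2=s_1s_2$,
\[
b_1b_2-b_1-b_2=(s_1+s_2)(1+2ra)^2-(s_1+s_2)(1+2ra)=2ra(1+2ra)\,s_1s_2 .
\]
Then I would set $z_i:=W_a^{\,i}(\tfrac12)-\tfrac12$. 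The formula for $W_{a,2}$ gives $z_1=ra$; since $\tfrac12+ra$ lies on the third branch, the formula for $W_{a,3}$ gives $z_2=ra-b_2z_1=ra(1-b_2)$; and on the second branch $W_{a,2}$ acts in these coordinates as the affine map $z\mapsto ra+b_1z$. Invoking the monotone behaviour of the trajectory of $W_a(\tfrac12)=\tfrac12+ra$ already described before the proposition — the iterates $W_a^{\,i}(\tfrac12+ra)$, $i=1,\dots,m-1$, stay strictly inside the second-branch interval and decrease, with $W_a^{\,m}(\tfrac12+ra)=\tfrac12-\tfrac1{2s_1}$ — this means precisely that $z_{j+1}=ra+b_1z_j$ for $j=2,\dots,m$ and that (\ref{Eq:def m1}) is the single equation $z_{m+1}=-\tfrac1{2s_1}$.

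Next I would iterate the affine recursion. Applying $z\mapsto ra+b_1z$ to $z_2=ra(1-b_2)$ a total of $m-1$ times yields $z_{m+1}=ra\bigl(\sum_{i=0}^{m-1}b_1^{\,i}-b_1^{\,m-1}b_2\bigr)$ (equivalently, using the fixed point $z^{*}=-\tfrac{ra}{b_1-1}$ of the map, $z_{m+1}-z^{*}=b_1^{\,m-1}(z_2-z^{*})$). Imposing $z_{m+1}=-\tfrac1{2s_1}$ and dividing by $ra\neq0$ produces exactly the first displayed identity $b_2b_1^{\,m-1}-\sum_{i=0}^{m-1}b_1^{\,i}=\tfrac1{2rs_1a}$, and this step is reversible.

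To pass from that identity to (\ref{good m1}), I would sum the geometric series and clear the nonzero denominator $b_1-1$, which reduces the identity to $b_1^{\,m-1}(b_1b_2-b_1-b_2)+1=\tfrac{b_1-1}{2rs_1a}$. Substituting the value of $b_1b_2-b_1-b_2$ from the first paragraph and writing $\tfrac{b_1-1}{2rs_1a}=\tfrac1{2ra}+1-\tfrac1{2rs_1a}$, the $+1$'s cancel and $1-\tfrac1{s_1}=\tfrac1{s_2}$ collapses the right-hand side to $\tfrac1{2ras_2}$. What is left is $b_1^{\,m-1}(1+2ra)=\tfrac1{4r^2a^2s_1s_2^2}$; multiplying by $s_1$ turns the left side into $s_1(1+2ra)\,b_1^{\,m-1}=b_1^{\,m}$, so $b_1^{\,m}=\tfrac1{4r^2a^2s_2^2}$, which is (\ref{good m1}). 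Every step being an equivalence, all three equations are equivalent.

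The one genuinely non-mechanical point is the first reduction: it relies on the fact that a \emph{single} affine branch $z\mapsto ra+b_1z$ controls every one of $z_2,\dots,z_{m+1}$, i.e.\ that no intermediate iterate of $W_a(\tfrac12)$ leaves the second branch before time $m$. That, however, is exactly the description of the trajectory of $W_a(\tfrac12)$ set up just before the proposition, so I would take it for granted. After that the argument is pure bookkeeping, the only recurring device being the repeated use of $s_1+s_2=s_1s_2$ (equivalently $\frac1{s_1}+\frac1{s_2}=1$) to annihilate the terms that do not carry a factor $b_1^{\,m-1}$; I expect that bookkeeping, rather than any conceptual difficulty, to be the step most prone to slips.
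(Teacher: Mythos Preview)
Your argument is correct and essentially follows the same route as the paper: both derive the closed form of $W_a^{m+1}(\tfrac12)$ by iterating the affine second-branch map, equate it to $\tfrac12-\tfrac1{2s_1}$, and then use $\tfrac1{s_1}+\tfrac1{s_2}=1$ to collapse the resulting geometric sum into $(s_1+2rs_1a)^m=\tfrac1{4r^2s_2^2a^2}$. The only substantive difference is that the paper imports the explicit expression for $W_a^{m+1}(\tfrac12)$ from \cite{ZhLi} rather than deriving it, and organizes the logic as (first identity) $\Rightarrow$ (\ref{good m1}) together with (\ref{Eq:def m1}) $\Rightarrow$ (\ref{good m1}); your version is self-contained, works in the centered coordinate $z=x-\tfrac12$, and makes the chain of equivalences (\ref{Eq:def m1}) $\Leftrightarrow$ first identity $\Leftrightarrow$ (\ref{good m1}) explicit. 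The payoff of your choice is that no external reference is needed and reversibility is transparent; the paper's version is shorter at the cost of that citation.
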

\begin{proof}
If \[(s_2+2rs_2a)(s_1+2rs_1a)^{m-1}-\sum\limits_{i=0}^{m-1}(s_1+2rs_1a)^i=\frac{1}{2rs_1a}\ ,\]
then
\[(s_1+2rs_1a)^{m-1}\left[(s_2+2rs_2a)(s_1+2rs_1a-1)-(s_1+2rs_1a)\right]=\frac{s_1-1}{2rs_1a}=\frac{1}{2rs_2a},\]
Thus, we obtain
\[(s_1+2rs_1a)^m=\frac{1}{4r^2s_2^2a^2}.\]

On the other hand, it is proven in \cite{ZhLi} that
\begin{eqnarray*}
&&W_a^{m+1}(1/2)\\
&&=-a^2(s_1+2rs_1a)^{m-1}\frac{r(2rs_1s_2+2rs_1s_2-2rs_1-2rs_2)+4r^3s_1s_2a}{s_1+2rs_1a-1}\\
&&\hskip 6 cm +\frac{s_1-1+2rs_1a-2ra}{2(s_1+2rs_1a-1)}.
\end{eqnarray*}
If equation (\ref{Eq:def m1}) holds, then
\[a^2(s_1+2rs_1a)^{m-1}\frac{2r^2s_1s_2+4r^3s_1s_2a}{s_1+2rs_1a-1}=\frac{s_1-1}{2s_1(s_1+2rs_1a-1)},\]
hence,
\[(s_1+2rs_1a)^{m-1}=\frac{1}{a^24r^2s_1s_2^2(1+2ra)}\]
which is equivalent to equation (\ref{good m1}).
\end{proof}
Using Proposition \ref{Prop5}, we can find the fixed vector of $A_m$. Let us denote it by $(c, \alpha_0, \alpha_1, \cdots, \alpha_{m})^T$. Then, the fixed function
(not necessarily normalized) of $P_a$ is:
$$g_m^*=c+\alpha_0\chi_{I_0}+\alpha_1\chi_{I_1}+\alpha_2\chi_{I_2}+\cdots+\alpha_{m-1}\chi_{I_{m-1}}+\alpha_{m}\chi_{I_{m}},$$
where
\begin{eqnarray*}
c&=&\frac{1}{2rs_1s_2a}\\
\alpha_0&=&\frac{1}{2rs_1a}\\
\alpha_1&=&(s_1+2rs_1a)^{m-1}\\
\alpha_2&=&(s_1+2rs_1a)^{m-2}\\
\cdots&&\\
\alpha_{m-2}&=&(s_1+2rs_1a)^{2}\\
\alpha_{m-1}&=&s_1+2rs_1a\\
\alpha_{m}&=&1.
\end{eqnarray*}

It was proven in \cite{ZhLi} that after normalization the  measures $g_m^*\cdot L$ converge to the measure
$$ \frac1{2r(s_1+s_2)(s_2+2)+2rs_1s_2^2}\left({2r(s_1+s_2)(s_2+2)}\mu_0+{2rs_1s_2^2}\delta_{1/2}\right) \ ,$$
where $L$ is Lebesgue measure, $\mu_0$ is the absolutely continuous invariant measure of $W_0$ and $\delta_{1/2}$ is Dirac measure at $1/2$.

\section{Second eigenvalues for Markov $W_a$ maps}
Now, instead for a fixed vector, we will look for an eigenvector corresponding to an eigenvalue $\lambda<1$.
Denote the eigenvector of $A_m$ associated with $\lambda$  by $(c, \alpha_0, \alpha_1, \cdots, \alpha_{m})^T$. Then, the corresponding eigenfunction  of $P_a$ associated with $\lambda$ is:
\begin{equation}\label{def g}
h_m=c+\alpha_0\chi_{I_0}+\alpha_1\chi_{I_1}+\alpha_2\chi_{I_2}+\cdots+\alpha_{m-1}\chi_{I_{m-1}}+\alpha_{m}\chi_{I_{m}}\ .
\end{equation}
The equation $$A_m h_m=\lambda h_m\ ,$$ is equivalent to the system
\begin{eqnarray*}
\lambda c&=&\frac{1}{2} c+\frac {1}{2s_2} \alpha_0\\
\lambda \alpha_0&=&\frac{1}{1+2ra}(c+\frac{1}{s_1}\alpha_0+\frac{1}{s_1}\alpha_m)\\
\lambda \alpha_1&=&\frac{1}{s_2(1+2ra)}(\alpha_0+\alpha_1+\dots+\alpha_m)\\
\lambda \alpha_2&=&\frac{1}{s_1(1+2ra)}\alpha_1\\
\lambda \alpha_3&=&\frac{1}{s_1(1+2ra)}\alpha_2\\
\cdots&&\\
\lambda \alpha_{m-2}&=&\frac{1}{s_1(1+2ra)}\alpha_{m-3}\\
\lambda \alpha_{m-1}&=&\frac{1}{s_1(1+2ra)}\alpha_{m-2}\\
\lambda \alpha_{m}&=&\frac{1}{s_1(1+2ra)}\alpha_{m-1}.
\end{eqnarray*}
We can solve this system starting from the last equation. Let $\alpha_m=1$. Then,
\begin{equation}\label{eigenvector1}\begin{split}
\alpha_{m}&=1 \\
\alpha_{m-1}&=\lambda s_1(1+2ra)\\
\alpha_{m-2}&=\lambda^2s_1^2(1+2ra)^{2}\\
\cdots&\\
\alpha_2&=\lambda^{m-2}s_1^{m-2}(1+2ra)^{m-2}\\
\alpha_1&=\lambda^{m-1}s_1^{m-1}(1+2ra)^{m-1}\\
\alpha_0&= \lambda s_2(1+2ra)\alpha_1-(\alpha_1+\alpha_2+\dots+\alpha_m)\\
&=\lambda^{m}s_1^{m-1}s_2(1+2ra)^{m}-\frac{\lambda^{m}s_1^{m}(1+2ra)^{m}-1}{\lambda s_1(1+2ra)-1}\\
&=\frac{\lambda^{m}s_1^{m-1}(1+2ra)^{m}(\lambda s_1s_2(1+2ra)-s_1s_2)+1}{\lambda s_1(1+2ra)-1}\\
c&=\lambda(1+2ra)\alpha_0-\frac{\alpha_0}{s_1}-\frac{1}{s_1}\\
c&=\frac {\alpha_0}{s_2(2\lambda-1)}
\end{split}
\end{equation}
We have two expressions  for $c$. The system can be solved only if they are equal. Thus, we obtain equation
\begin{equation*}\begin{split}
&\lambda^{m}s_1^{m-2}(1+2ra)^{m}(\lambda s_1s_2(1+2ra)-s_1s_2)\\
&\hskip 2cm = \frac{\lambda^{m}s_1^{m-1}(1+2ra)^{m}(\lambda s_1s_2(1+2ra)-s_1s_2)+1}{s_2(2\lambda-1)(\lambda s_1(1+2ra)-1)}\ ,
\end{split}
\end{equation*}
or
$$\lambda^{m}s_1^{m-1}s_2(1+2ra)^{m}(\lambda (1+2ra)-1)\left[s_2(2\lambda-1)(\lambda s_1(1+2ra)-1)-s_1\right]=1\ .$$
We are going to prove that for small $a$ this equation has a solution  $\frac {1-2ra}{1+2ra}<\lambda<\frac 1{1+2ra}$.
Let us introduce an auxiliary function
 $$\phi(\lambda)=\lambda^{m}s_1^{m-1}s_2(1+2ra)^{m}(\lambda (1+2ra)-1)\left[s_2(2\lambda-1)(\lambda s_1(1+2ra)-1)-s_1\right]\ .$$
Obviously $\phi(\frac 1{1+2ra})=0$. We will show that $\phi(\frac {1-2ra}{1+2ra})>1$ if $a$ is small enough.
We have
\begin{eqnarray*}
\phi\left(\frac {1-2ra}{1+2ra}\right)&=&\left(\frac {1-2ra}{1+2ra}\right)^{m}s_1^{m-1}s_2\left(1+2ra\right)^{m}\left(\left(\frac {1-2ra}{1+2ra}\right) \left(1+2ra\right)-1\right)\cdot\\
&&\quad \cdot\left[s_2\left(2\left(\frac {1-2ra}{1+2ra}\right)-1\right)\left(\left(\frac {1-2ra}{1+2ra}\right) s_1\left(1+2ra\right)-1\right)-s_1\right]\\
&=& (1-2ra)^ms_1^{m-1}s_2(-2ra)\frac{-2ra(s_2+5s_1-6s_1s_2ra)}{1+2ra}\\
&=&(1-2ra)^ms_1^{m-1}s_24r^2a^2\frac{s_2+5s_1-6s_1s_2ra}{1+2ra}.
\end{eqnarray*}
Using (\ref{good m1}) we obtain
\begin{eqnarray*}
\phi\left(\frac {1-2ra}{1+2ra}\right)&=&\left(\frac{1-2ra}{1+2ra}\right)^m\frac{s_2+5s_1-6s_1s_2ra}{s_1s_2(1+2ra)}\\
&=& \left(\frac{1-2ra}{1+2ra}\right)^m\frac{1+\frac{4}{s_2}-6ra}{1+2ra}.
\end{eqnarray*}
Note that if $a<\frac{1}{2rs_2}$, then $\frac{1+\frac{4}{s_2}-6ra}{1+2ra}>1$. Furthermore, $\lim\limits_{a\rightarrow 0}\frac{1+\frac{4}{s_2}-6ra}{1+2ra}=1+\frac{4}{s_2}>1$.

Using (\ref{good m1}) again, we can represent $m$ as
\begin{equation}\label{m=}
m=\frac{-2\ln(2s_2ra)}{\ln(s_1+2s_1ra)}\ ,
\end{equation}
which gives
\begin{eqnarray*}
\phi\left(\frac{1-2ra}{1+2ra}\right)&=&\frac{1+\frac{4}{s_2}-6ra}{1+2ra}\left(\frac{1-2ra}{1+2ra}\right)^{\frac{-2\ln(2s_2ra)}{\ln(s_1+2s_1ra)}}\\
&=&\frac{1+\frac{4}{s_2}-6ra}{1+2ra}\exp\left(-2\ln\left(\frac{1-2ra}{1+2ra}\right)\frac{\ln(2s_2ra)}{\ln(s_1+2s_1ra)}\right)\ .
\end{eqnarray*}
Since $\lim\limits_{a\rightarrow 0}\ln\left(\frac{1-2ra}{1+2ra}\right)\ln(a)=0$,
the argument of $\exp$ converges to $0$ as $a\to 0$.  Thus
$$\lim_{a\to 0}\phi\left(\frac{1-2ra}{1+2ra}\right)=1+\frac{4}{s_2}\ .$$
This proves our claim for $a$ small enough.
We proved the following
\begin{theorem}\label{lambda}
Assume that  $a$ satisfies (\ref{good m1}), for  some integer $m$, i.e., $W_a$ map is Markov with $W_a^{m+1}(1/2)=\frac12-\frac{1}{2s_1}$.
For $a$ small enough, Perron-Frobenius operator $P_a$ has an eigenvalue $\lambda_a$ satisfying
\begin{equation}\label{lambda_est}
\frac{1-2ra}{1+2ra}<\lambda_a<\frac 1{1+2ra}\ .
\end{equation}
The corresponding eigenfunction is given by equations (\ref{def g}) and (\ref{eigenvector1}), up to a multiplicative constant.
\end{theorem}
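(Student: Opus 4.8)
The plan is to reduce the existence of the eigenvalue $\lambda_a$ to an intermediate-value argument for the auxiliary function $\phi(\lambda)$ on the interval $\left[\frac{1-2ra}{1+2ra},\frac{1}{1+2ra}\right]$. First I would set up the eigenvalue problem exactly as in the excerpt: writing the eigenfunction $h_m$ in the form \eqref{def g} with unknown coefficients, the equation $A_m h_m=\lambda h_m$ becomes the displayed triangular-plus-coupling linear system. Solving it from the bottom up expresses $\alpha_{m-1},\dots,\alpha_1$ as explicit powers of $\lambda s_1(1+2ra)$, then $\alpha_0$ and $c$ in two ways, and the consistency of the two expressions for $c$ is precisely the scalar equation $\phi(\lambda)=1$. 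So a genuine eigenvalue in the desired range exists if and only if $\phi$ takes the value $1$ there.

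Next I would analyze the endpoints. At $\lambda=\frac{1}{1+2ra}$ the factor $\bigl(\lambda(1+2ra)-1\bigr)$ in $\phi$ vanishes, so $\phi\bigl(\frac{1}{1+2ra}\bigr)=0<1$. At the left endpoint $\lambda=\frac{1-2ra}{1+2ra}$ I would carry out the substitution: the factor $\lambda(1+2ra)-1=-2ra$ and the bracket simplifies to $-2ra(s_2+5s_1-6s_1s_2ra)/(1+2ra)$, yielding $\phi\bigl(\frac{1-2ra}{1+2ra}\bigr)=(1-2ra)^m s_1^{m-1}s_2\,4r^2a^2\,\frac{s_2+5s_1-6s_1s_2ra}{1+2ra}$. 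The Markov relation \eqref{good m1} gives $(s_1+2rs_1a)^m=\frac{1}{4r^2s_2^2a^2}$, which collapses the $s_1^{m-1}s_2\cdot 4r^2a^2$ prefactor and leaves $\phi\bigl(\frac{1-2ra}{1+2ra}\bigr)=\left(\frac{1-2ra}{1+2ra}\right)^m\frac{1+\frac{4}{s_2}-6ra}{1+2ra}$. Then I would use \eqref{m=} to write $m=\frac{-2\ln(2s_2ra)}{\ln(s_1+2s_1ra)}$, turn the power $\left(\frac{1-2ra}{1+2ra}\right)^m$ into $\exp$ of a product, and observe that $\ln\!\left(\frac{1-2ra}{1+2ra}\right)\ln a\to 0$ as $a\to 0$ while $\ln(s_1+2s_1ra)\to\ln s_1>0$, so the exponent tends to $0$ and the whole expression tends to $1+\frac{4}{s_2}>1$. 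Hence for $a$ small enough $\phi\bigl(\frac{1-2ra}{1+2ra}\bigr)>1$.

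With $\phi$ continuous in $\lambda$ (it is a polynomial) and $\phi$ taking a value $>1$ at the left endpoint and the value $0$ at the right endpoint, the intermediate value theorem produces $\lambda_a\in\left(\frac{1-2ra}{1+2ra},\frac{1}{1+2ra}\right)$ with $\phi(\lambda_a)=1$; back-substituting into \eqref{eigenvector1} gives the eigenvector of $A_m$, hence via \eqref{def g} an eigenfunction of $P_a$ with eigenvalue $\lambda_a$. That $\lambda_a\to 1$ then follows since both bounds in \eqref{lambda_est} tend to $1$.

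I expect the main obstacle to be the endpoint computation at $\lambda=\frac{1-2ra}{1+2ra}$: the algebraic simplification of the bracket $s_2(2\lambda-1)(\lambda s_1(1+2ra)-1)-s_1$ and the cancellation against the Markov identity must be done carefully, and the delicate point is showing that the exponential correction factor does not spoil the limit — this rests on the asymptotic $\ln\!\left(\frac{1-2ra}{1+2ra}\right)\ln a\to 0$, i.e.\ that $m$ grows only logarithmically in $1/a$ while $\ln\frac{1-2ra}{1+2ra}$ is of order $a$, so their product with the bounded factor $1/\ln(s_1+2s_1ra)$ vanishes. Everything else is routine linear algebra and continuity. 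One should also note in passing that the constructed eigenfunction is genuinely distinct from the invariant density $g_m^*$ since $\lambda_a<1$, so this really is a ``second'' eigenvalue.
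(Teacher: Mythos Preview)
Your proposal is correct and follows essentially the same route as the paper: reduce the eigenvector system to the scalar consistency equation $\phi(\lambda)=1$, evaluate $\phi$ at the two endpoints (getting $0$ at $\frac{1}{1+2ra}$ and, via the Markov identity \eqref{good m1} and the expression \eqref{m=} for $m$, a value tending to $1+\tfrac{4}{s_2}>1$ at $\frac{1-2ra}{1+2ra}$), and apply the intermediate value theorem. The endpoint computation and the asymptotic $\ln\!\bigl(\tfrac{1-2ra}{1+2ra}\bigr)\ln a\to 0$ that you flag as the delicate step are exactly the ingredients the paper uses.
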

\begin{remark}
Using tedious calculations we were able to show that $\phi''$ is positive in a neighbourhood of 1. Since
$\phi\left(\frac{1-2ra}{1+2ra}\right)>1$, $\phi\left(\frac{1}{1+2ra}\right)=0 $ and $\phi(1)=1$, for small $a$ there is only one
eigenvalue  in the interval $\left(\frac{1-2ra}{1+2ra},1\right)$, i.e., $\lambda_a$ we found in Theorem \ref{lambda} is really the ``second" eigenvalue.
\end{remark}

\section{Eigenfunction for $\lambda_a<1$}
In this section we take a closer look at the eigenfunction corresponding to the second  eigenvalue $\lambda_a$ found in Theorem \ref{lambda}. We omit  the subscript
``a" to simplify the notation.
Let $(c, \alpha_0, \alpha_1, \cdots, \alpha_{m})$ be the $\lambda$-eigenvector given by  (\ref{eigenvector1}).
We have $$\alpha_j=\lambda^{m-j}s_1^{m-j}(1+2ra)^{m-j}>0\ \ ,\ \ j=1,\dots, m\ .$$
Next, $$\alpha_0=\frac{\lambda^{m}s_1^{m-1}(1+2ra)^{m}(\lambda s_1s_2(1+2ra)-s_1s_2)+1}{\lambda s_1(1+2ra)-1}<0\ ,$$
since $\lambda(1+2ra)<1$ but very close to $1$ and using formula (\ref{m=}) we can show that $\lambda^{m}(1+2ra)^{m}$ approaches $1$ as $m\to\infty$ .
As $\alpha_0<0$, we also have
$$c=\frac {\alpha_0}{s_2(2\lambda-1)}<0 \ .$$
The $P_a$ eigenfunction $h_m$, defined in (\ref{def g}), is positive on some interval $G_m=[W_a^{m_1}(1/2),1/2+a/4]$ and negative
outside this interval. Since, as $a$ decreases, more and more of numbers $\alpha_m, \alpha_{m-1},\alpha_{m-2}\dots$
are necessary to balance $\alpha_0+c$, we have $\lim_{a\to 0}(m-m_1)=+\infty$. This implies, that intervals $G_m$ shrink to
the point $1/2$ as $a\to 0$.

Since $0<\lambda<1$  we  have $\int_0^1 h_m d{\rm L}=0$. Let $K_m=\int_0^1 |h_m| d{\rm L}$.
The normalized signed measures $\frac 1{K_m} h_m \cdot {\rm L}$ converge $*$-weakly to
the measure
$$-\frac 12 \mu_0+ \frac 12 \delta_{(1/2)}\ ,$$
where $\mu_0$ is $W_0$-invariant absolutely continuous measure and $\delta_{(1/2)}$ is  Dirac measure at point $1/2$.

As it is described in \cite{FrSt} the presence of the eigenvalue $\lambda$ close to $1$ makes the system behave in a metastable
way. The sets $A^+=\{t: h_m(t)\ge 0\}$ and $A^-=\{t: h_m(t)<0\}$ are almost invariant with the escape rates bounded by $-\ln \lambda$
which is close to $0$. This means that a typical trajectory stays for a long time in $A^+$, then jumps to $A^-$,
 stays there for a long time, then jumps to $A^+$, spend there
long time, etc. Despite the small essential spectral radius (equal to $\max\{1/s_1, 1/s_2\}$), the system converges to equilibrium slowly
at the rate given by $C\lambda^n$, for some constant $C$.

Fig. \ref{fig:eigenfunctions} shows graphs of normalized functions $h_m$ produced using Maple 13. We used  $s_1=s_2=2$ and $r=1/4$.

a) $m=5$, $a=0.14789903570478$, $\lambda=0.8732372308$, $K_m=3.819456626$;

 b) $m=7$, $a=0.077390319202550$, $\lambda=0.9365803433$, $K_m=8.987509817$.

\begin{figure}[h] 
  \centering
  \includegraphics[bb=0 0 693 308,width=5.02in,height=2.23in,keepaspectratio]{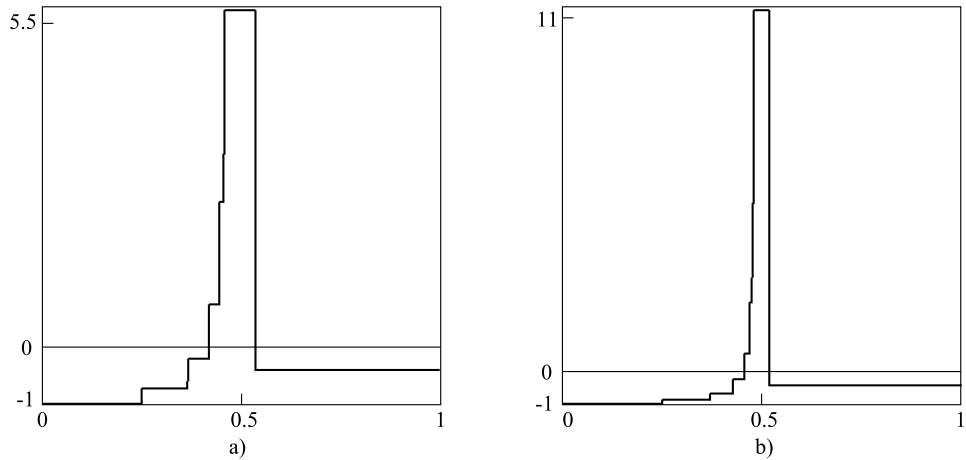}
  \caption{Normalized eigenfunctions $h_m$.}
  \label{fig:eigenfunctions}
\end{figure}

Note that the vertical scales of the pictures are very different.

\end{document}